\newtheorem{theorem}{Theorem}
\newtheorem{remark}{Remark}
\newcounter{tdfn}
\newenvironment{dfn}
{\vspace{0.15cm}{\bf Definition \arabic{tdfn}.}} {\par
\addtocounter{tdfn}{1}}
\newcounter{trk}
\def\:{\colon}
\def\R{{\mathbb R}}
\def\Z{{\mathbb Z}}
\def\0{{\mathbf 0}}
\def\1{{\mathbf 1}}
\def\R{{\mathbb R}}
\author{V.O.Manturov \footnote{Moscow Institute of Physics and Technology}, I.M.Nikonov\footnote{Moscow State University}}
\title{Maps from knots in the cylinder to flat-virtual knots}
\begin{document}

\maketitle

\section{Introduction}

Virtual knot theory invented by Kauffman \cite{VirtualKnots} in late nineties, has experienced
a lot of developments over the last two decades, among the most important of them, we can mention {\em parity}
and {\em picture-valued invariants}, \cite{Parity}.

The latter allow one to realise the principle saying that {\em if a diagram is complicated enough then it realises
itself} in a sense very close to that one have when working with free groups: the unique reduced word representing
a group element appears in any other element of this group.

In (virtual, free) knot theory, this principle is realised by the {\em parity bracket}, \cite{Parity}, an invariant
of knots valued in linear combinations of knot diagrams. For some (odd irreducible) knot diagrams one has the formula

$$[K]=K$$

meaning that the value of the invariant called bracket $[\cdot ]$ on a concrete knot diagram $K$ equals this diagram
with coefficient $1$. Taking any other diagram $K'$ equivalent to $K$, we get $[K']=[K]$, which, in turn, mean that
the diagram $K'$ ``contains'' $K$ as a subdiagram. Such an approach allows to judge about knots by looking at one particular
diagram and estimate various complexities which open a deeper insight than just numerical or polynomial invariants:
we can say much more than just a bare estimate of crossing number or a certain genus.

In a sense, the parity bracket is a variation of the Kauffman bracket where we smooth only {\em even} crossings
and evaluate the remaining diagrams (having former odd crossings) to themselves. In the first version of the
parity bracket, we are left with $\pm$ coefficients, however, many variations of it allow us to take coefficients
in the ring $\Z [a,a^{-1}]$ as the classical Kauffman bracket, see
\cite{IMN}.

Besides that, virtual knots have many invariants valued in free groups and free products of cyclic groups having
similar properties.

All such properties become possible because they use some intrinsic ``parity'' or ``non-trivial homology'' of the ambient
space where virtual knots live. Classical knots have no parity and no parity bracket.

However, classical knot theory has many instances of non-trivial homology and free groups, in particular, the braid
group configuration spaces and braid group faithful action on free groups.

In 2015, the author introduced a two-parameter family of groups called $G_{n}^{k}$ and formulated the following principle \cite{MN}.

If dynamical system describing a motion of $n$ particles possess a {\em nice codimension $1$ property governed
exactly by $k$ particles then such dynamics possess invariants valued in groups $G_{n}^{k}$}.
Without going into details of the groups $G_{n}^{k}$, we mention just one property for dynamics of motion
of $n$ points on the plane: for $k=3$ we can take the property ``three points are collinear''.

In some sense, the invention of the groups $G_{n}^{3}$ were an attempt to invent a substitute for
parity in the classical setup: the ambient space $\R^{2}$ (or $\R^{3}$) does not possess any homology,
but when dealing with {\em braids}, the punctured plane does contain homology groups.

Lots of invariants of braids appeared since that time; moreover, the $G_{n}^{k}$ theory works well for studying fundamental
groups of other configuration spaces.

The main obstruction from extending this theory from braids to knots was a very important condition that the number of
particles of dynamical system should stay fixed during the motion; in particular, a knot in $\R^{3}$ with its maxima
and minima does not satisfy such conditions.

In \cite{ManturovNikonovBraids}, we take the above approaches together and convert classical braids into braids in the cylinder,
and then to braid diagrams drawn in surfaces of higher genera.


In the present paper, we address the problem how to get a map from knots in the cylinder and on the thickened torus to some (generalisation of)
virtual knots called {\em virtual-flat knots.}

The main feature of this construction is that starting from objects with rather modest homology group ($H_{1}(S^{1}\times I^{1})=\Z$)
we can construct virtual knots of arbitrary high genus hence having lots of features.

The main construction takes a diagram on a cylinder (torus) and adds some ``invisible'' crossings which gives rise to a diagram
which can be formally immersed but not embedded (drawn) on the cylinder (torus) and living comfortably in thickened surfaces of higher
genera.

This allows one to ``pull back'' invariants of virtual theory to the theory of knots in the thickened cylinder (torus)
where the parity bracket and other picture-valued invariants are not strong enough.

This project was initiated in the paper \cite{ManturovNikonovBraids}

at the level of braids where a map from
classical braids to (a generalisation of) classical braids led to new representations of the classical braid group.

A {\em flat-virtual link diagram} is a four-valent graph on the plane
where each vertex is of one of the following three types:
\begin{enumerate}
\item classical (in this case one pair of edges is marked as an {\em overcrossing strand}).
\item flat;
\item virtual.
\end{enumerate}

The number of link components of a flat-virtual link diagram is just the number of unicursal
component of such. A {\em flat-virtual knot diagram} is a one-component flat-virtual link diagram.

The number of components is obviously invariant under the moves listed below, so, it will be reasonable
to talk about the number of components of a {\em flat-virtual link.}

\begin{dfn}
Here a {\em flat-virtual link} is an equivalence class of flat-virtual link diagrams modulo the following moves:
\begin{enumerate}
\item Classical Reidemeister moves which affect only classical crossings [there are three such moves in the non-orientable
case];
\item Flat Reidemeister moves which refer to flat crossings only and are obtained by forgetting the information about
overpasses and underpasses [the second Reidemeister move and the third Reidemeister move];
\item Mixed flat-classical Reidemeister-3 move when a strand containing two consecutive flat crossings
passes through a classical crossing of a certain type [the over-under structure at the unique classical crossing is preserved];
\item Virtual detour moves: a strand containing only virtual crossings and self-crossings can be removed
and replaced with a strand with the same endpoints where all new intersections are marked by virtual crossings.
\end{enumerate}

\end{dfn}

\begin{remark}
The last move can be expressed by a sequence of local moves containing purely virtual Reidemeister
moves and third mixed Reidemeister moves where an arc containing virtual crossings only goes through
a classical (or a flat) crossing.
\end{remark}

\begin{dfn}
By a {\em restricted flat virtual link} we mean an analogous equivalence class where
we forbid the third Reidemeister move with three flat virtual crossings.

\end{dfn}

\begin{remark}
Following Bar-Natan's point of view, virtual knots (links) can be considered as having only classical crossings
meaning that the Gauss diagram (multi-circle Gauss diagram) indicates the position of crossings on the plane
and the way how they have to be connected to each other (the way the connecting strands virtually intersect each other
does not matter because of the detour move).

The same way, one can define flat-virtual knot and link diagrams can be encoded by Gauss diagrams with
the only difference being that when a classical crossing carries two bits of information (over/undercrossing and
writhe) while the flat crossing only carries one bit of information (clockwise rotation).
\end{remark}

\begin{remark}
There is an obvious forgetful map from flat-virtual links to flat links: we just make all classical crossing
flat and forget the over-under information.
\end{remark}

Given a cylinder $C=S^{1}\times [0,1]$ with angular coordinate $\alpha\in [0,2\pi=0)$ and vertical coordinate $z\in [0,1]$.
Fix a number $d$ and say that two points $x,y$ on $c$ are {\em equivalent} if they have the same vertical coordinate and
their angular coordinate differs by $\frac{2\pi k}{d}$ for some integer $k$.
Likewise, for the torus $T^{2}$ with two angular coordinates $(\alpha,\beta)\in [0,2\pi=0)$ we fix a lattice $l$
as a discrete subgroup. Say, choosing $d_{1},d_{2}$ our group contains points with angular coordinates
$(\frac{2\pi k_{1}}{d_{1}},\frac{2\pi k_{2}}{d_{2}})$. We say that two points $A,B$ on the torus are {\em equivalent} if their
difference $A-B$ belongs to the subgroup $l$.

Now, having a diagram $K$ on the cylinder (torus) with a fixed number $d$ (lattice $l$) we define
a {\em flat-virtual} diagram $\phi_{d}(K)$ (resp., $\phi_{l}(K)$
(up to virtual detour moves) as follows. First, we assume without loss of generality that
$K$ is {\em generic with respect to the subgroup}, i.e., for any two distinct equivalent points $e,e'$ belonging
to edges are not crossing points and the tangent vectors at these edges are transverse.

In both cases (for $\phi$ and for $\phi'$) for each pair of equivalent points on the edges
(say, $(e_{j},e'_{j})$ we create a new {\em flat} crossing in the following manner.
Let $e'_{j}= e_{j}+g$, where $g$ is the element of the corresponding group ($\Z_{d}$ or $l$).
We choose one
of these points (never mind which one, say, $e_{j}$), remove a small neighbourhood $N=U(e_{j})$
with endpoints $A,B$, shift it by $g$ and connect $A$ to $A+g$ and $B$ to $B+g$ by any curves on the
plane not passing through any crossings (all crossings on the newborn curves $[A,A+g], [B,B+g]$
are declared to be virtual (Fig.~\ref{fig:main_map}).

\begin{figure}
\centering\includegraphics[width=0.4\textwidth]{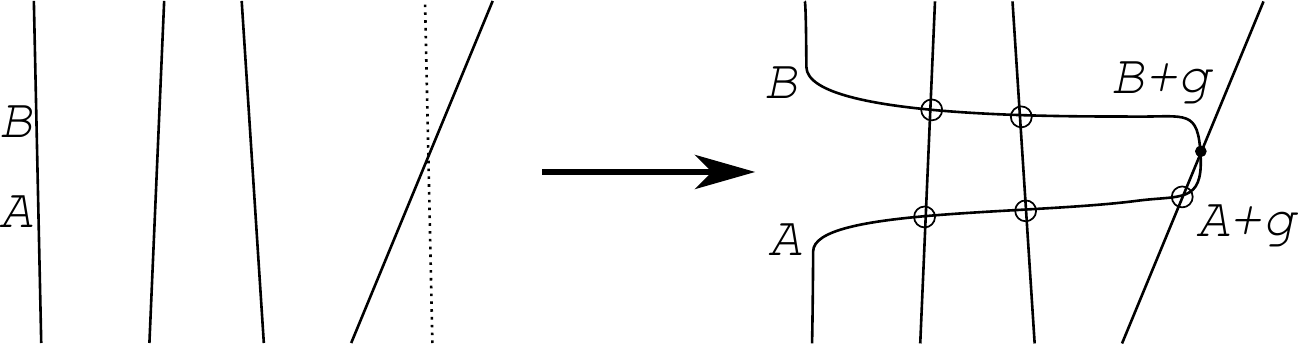}
\caption{The map $\phi$}\label{fig:main_map}
\end{figure}

\begin{theorem}
The maps $\phi_{d},\phi_{l}'$ defined above are well defined maps from links on the thickened cylinder (resp., thickened torus)
to flat-virtual links. In other words, if $K$ and $K'$ are isotopic diagrams on the cylinder (torus)
then $\phi_{d}(K)\sim \phi_{d}(K')$
(resp., $\phi'_{l}(K)\sim \phi'_{l}(K)$) where
$\sim$
denotes the equivalence of flat-virtual links.
\end{theorem}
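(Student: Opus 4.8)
The plan is to replace $\phi_d(K)$ by its Gauss--diagram data and to show that an isotopy of $K$ changes this data only through the flat--virtual Reidemeister moves allowed in the definition of flat--virtual link.

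\emph{The Gauss diagram of $\phi_d(K)$.} Traversing $K$ once, one meets the original classical crossings (each with its over/under and writhe), and for every equivalent pair $(e_j,e_j')$ one passes exactly twice through the flat crossing created for it: once while running along the arc shifted by $g$, once along the original branch through the other point. Hence the multi--circle Gauss diagram of $\phi_d(K)$ is the Gauss diagram of $K$ with one extra flat chord per equivalent pair, joining the two corresponding positions and carrying the rotation datum. By the Bar--Natan convention the virtual crossings placed on the connecting arcs do not enter this data, and a short check shows that the chord's local bit does not depend on which of $e_j,e_j'$ was shifted. So the flat--virtual link represented by $\phi_d(K)$ is independent of all choices in the construction (two choices give flat--virtual diagrams with the same Gauss diagram, hence equivalent by detour moves); thus $\phi_d$ is a well--defined map on generic cylinder diagrams, and from now on the connecting arcs can be ignored, being absorbed by detour moves.

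\emph{Decomposing the isotopy.} Only finitely many rotations are relevant, as $\Z_d$ (and, on the torus, the lattice $l=\Z_{d_1}\times\Z_{d_2}$) is finite; so a generic isotopy between generic diagrams $K$ and $K'$ may be taken transverse to all relevant discriminants and then passes, one at a time, through exactly the following codimension--one events:
\begin{enumerate}
\item an ordinary Reidemeister move $R1$, $R2$ or $R3$ among the crossings of $K$;
\item a tangency of $K$ with a rotated copy $K+g$ (birth/death of two equivalent points);
\item a rotated copy $K+g$ passing through a classical crossing of $K$;
\item three rotated copies $K+g_1,K+g_2,K+g_3$ meeting at one point.
\end{enumerate}
Genericity lets us keep the supporting disk of an ordinary Reidemeister move so small that it is disjoint from all its rotated copies and from the finitely many equivalent points, so move (1) touches no flat chord; coincidences of two events from the list are codimension $\ge 2$ and are avoided. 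It remains to see that each event acts on the combined Gauss diagram by an allowed flat--virtual move. Event (1) moves only classical chords, by the corresponding classical Reidemeister move. In event (2) the two newborn points are consecutive along $K$ and so are their partners, so the two new flat chords form a Reidemeister--$2$ pair and are created/destroyed by a flat $R2$. In event (4) the three pairwise--equivalent points give three flat chords undergoing a triple--point interchange, i.e.\ a flat $R3$; this is the only place the \emph{unrestricted} flat $R3$ is genuinely used, which explains why the restricted notion is excluded from the statement. Event (3) is the delicate one: a rotated copy that passes through a transverse classical crossing $c$ necessarily meets both branches of $c$ in a neighbourhood, so the shifted arc carries there two consecutive flat crossings, and as the copy sweeps across $c$ these two flat chord--ends and their partners slide past the classical chord--ends (and past one another) --- which is precisely the mixed flat--classical Reidemeister--$3$ move, the over/under at $c$ being unaffected. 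Hence in every case the Gauss diagram of $\phi_d(K)$ changes only by flat--virtual Reidemeister moves, so $\phi_d(K)\sim\phi_d(K')$.

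The thickened--torus statement is obtained verbatim with $l$ in place of $\Z_d$: the Gauss--diagram description and the event list are identical, and the same four moves settle all cases. I expect the main obstacle to be the last event above: one must confirm that the local picture produced by an equivalent point sliding past a classical crossing is exactly the single type of mixed move admitted by the definition (in particular that the rotated copy always meets both branches of the crossing), together with the transversality bookkeeping that makes the event list complete and keeps small kinks clear of their own rotated copies.
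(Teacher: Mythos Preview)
The paper does not actually supply a proof of this theorem: after stating Theorems~1 and~2 it passes directly to the section of examples. So there is no ``paper's own proof'' to compare against, and your proposal has to be judged on its own.

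As an outline your argument is sound and is the natural one: encode $\phi_d(K)$ by its Gauss diagram (classical chords from the crossings of $K$, one flat chord per equivalent pair), observe that the virtual connecting arcs are immaterial by detour, then put a generic isotopy in general position with respect to the finitely many shifts and list the codimension--one events. Your four events are the right ones, and the identification of event~(2) with flat $R2$, event~(3) with the mixed flat--classical $R3$, and event~(4) with flat $R3$ is correct. Your remark that event~(4) is the sole source of the purely flat $R3$ is exactly the mechanism behind Theorem~2: for $d=2$ the group $\Z_2$ has a single nontrivial shift, so three pairwise equivalent branches cannot occur and event~(4) never arises, whence the restricted equivalence suffices.

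Two places deserve more care before this becomes a proof rather than a proposal. First, the well--definedness step: you assert that the flat chord and its rotation bit are independent of which of $e_j,e_j'$ is shifted, but this should be argued (swapping the roles replaces $g$ by $-g$ and exchanges the two local branches, and one must see that the resulting flat crossing has the same cyclic order, hence the same Gauss--diagram datum). Second, in event~(3) you need that the moving branch of $K+g$ is transverse to \emph{both} strands at the classical crossing; tangency to one of them is codimension~$2$ and can be excluded, but say so explicitly, and then check that the two flat chords created really are consecutive on the shifted arc so that the move matches the paper's description ``a strand containing two consecutive flat crossings passes through a classical crossing''. With these points filled in, your sketch is a complete proof along the lines one would expect.
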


Moreover, for the special case of knots in the full torus for $d=2$ we have a more exact statement
\begin{theorem}
If $K$ and $K'$ are isotopic diagrams on the cylinder then
$\phi(K)$ and $\phi(K)$ are diagrams of the same restricted flats virtual link.
\end{theorem}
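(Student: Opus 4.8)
Running the proof of Theorem~1 in the special case $d=2$, the plan is to check that the only move present in the flat-virtual theory but absent from the restricted one -- the third Reidemeister move on three flat crossings -- is never actually needed. For $d=2$ the nontrivial element of $\Z_{2}$ acts as the rotation $\sigma$ by $\pi$, a fixed-point-free involution of the cylinder $C=S^{1}\times[0,1]$, and it is convenient to describe $\phi(K)$ through the overlaid immersed pair $K\cup\sigma(K)$: up to virtual detour moves, the flat crossings of $\phi(K)$ are in bijection with the transverse intersection points of $K$ with $\sigma(K)$ (the ``equivalent points lying on edges''), each being produced by transplanting a small sub-arc of $K$ over one of the two equivalent points to a neighbourhood of the other, with the two connecting arcs declared virtual. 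In particular, \emph{every flat crossing of $\phi(K)$ has one strand on an edge of $K$ and the other on a transplanted sub-arc of $\sigma(K)$, and two transplanted sub-arcs are never made to cross flatly.}

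First I would put the isotopy $K\rightsquigarrow K'$ in general position with respect to $\sigma$, so that it becomes a concatenation of elementary steps of the following kinds: (a) an ambient isotopy of the cylinder not changing the combinatorial type of the pair $K\cup\sigma(K)$; (b) a classical Reidemeister move performed on $K$ (equivalently, performed antipodally on $\sigma(K)$); (c) a simple tangency between a branch of $K$ and a branch of $\sigma(K)$; (d) a branch of one of the two curves passing transversally through a double point of the other. The point to stress, and the reason the theorem is limited to $d=2$, is that (a)--(d) is the \emph{complete} list of codimension-one events: the overlay consists of only the two generic immersed curves $K$ and $\sigma(K)$, so a degeneration at a single point involves at most two distinct sheets, unless all three sheets lie on the same curve -- an ordinary triple-point degeneration, which is a third Reidemeister move of $K$ and hence falls under~(b). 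There is no event in which three distinct sheets cross concurrently. For $d\ge 3$ such an event does occur -- three mutually equivalent points of $K$, i.e.\ a genuine triple point of the overlay $K\cup\sigma(K)\cup\cdots\cup\sigma^{d-1}(K)$ -- and it is precisely this that forces the flat third Reidemeister move; this is why the restricted statement is special to $d=2$.

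Next I would translate each elementary step into a sequence of restricted flat-virtual moves on $\phi(K)$. Step~(a) moves only the transplant arcs, so $\phi(K)$ changes by planar isotopy and virtual detour moves. Step~(b) is realised, as in the proof of Theorem~1, by a classical Reidemeister move on $\phi(K)$ (after using virtual detours to clear the transplant arcs from the relevant disk) together with finitely many steps of types (c) and (d) taking place in the antipodal disk. A step of type~(c) creates or destroys a pair of equivalent points, hence a pair of flat crossings of $\phi(K)$ bounding a flat bigon, so it is a single flat second Reidemeister move followed by virtual detour moves. In a step of type~(d), at the instant a branch of $\sigma(K)$ crosses a double point $c$ of $K$, the diagram $\phi(K)$ near $c$ is the classical crossing $c$ together with one transplanted arc of $\sigma(K)$ carrying two consecutive flat crossings (its intersections with the two branches of $K$ at $c$); the event slides this flat-carrying strand across $c$ without altering the over/under data at $c$ -- that is, it is exactly the mixed flat-classical third Reidemeister move of the Definition, and the symmetric case of a branch of $K$ crossing a double point of $\sigma(K)$ is identical. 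Since classical Reidemeister moves, the flat second Reidemeister move, the mixed third Reidemeister move and virtual detour moves are all permitted in the restricted theory, $\phi(K)$ and $\phi(K')$ represent the same restricted flat-virtual link.

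I expect step~(d) to require the most care: checking that the over/under information at the classical crossing is genuinely preserved, so that the move obtained is of the \emph{allowed} type of mixed third Reidemeister move, and verifying at every stage that the virtual connecting arcs carried along can be legitimately absorbed by detour moves. The decomposition in the second paragraph -- in particular the assertion that for $d=2$ the overlay $K\cup\sigma(K)$ admits no three-sheet degeneration -- is conceptually the heart of the matter; once correctly formulated it is short, and it is exactly the point at which the hypothesis $d=2$ enters and beyond which the argument fails for $d\ge 3$.
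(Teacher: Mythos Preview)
The paper does not contain a proof of this theorem: after the statement of Theorem~2 the text passes directly to the section ``Examples'' and never returns to the argument, so there is nothing to compare your proposal against line by line.

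That said, your sketch is the natural argument and is essentially sound. The identification of $\phi(K)$ (up to detours) with the overlay $K\cup\sigma(K)$, with flat crossings precisely at $K\cap\sigma(K)$, is the right bookkeeping device, and the enumeration (a)--(d) of codimension-one events in a generic one-parameter family of such overlays is correct for $d=2$. The crucial combinatorial point you isolate --- that a triple point of $K\cup\sigma(K)$ must have at least two of its three sheets on the same curve, so the resulting third Reidemeister move is either classical (three $K$-sheets) or mixed flat--classical (two $K$-sheets and one $\sigma(K)$-sheet, or its $\sigma$-image) and never purely flat --- is exactly why the restricted equivalence suffices. Your remark that for $d\ge 3$ three mutually equivalent edge points produce a genuine triple point of the overlay with three sheets on three distinct translates, forcing the flat third move, correctly locates where the hypothesis $d=2$ is used.

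Two small places deserve a touch more care when you write this out. First, in step~(b) you should spell out that a classical Reidemeister move on $K$ in a disc $U$ may simultaneously alter $\sigma(K)$ inside the antipodal disc $\sigma(U)$; if $K$ meets $\sigma(U)$, the induced changes in $K\cap\sigma(K)$ there are handled by your types~(c) and~(d), but this should be said explicitly rather than folded into a parenthetical. Second, your assertion that ``two transplanted sub-arcs are never made to cross flatly'' is true by construction (any such incidental crossing is virtual), but it is worth a sentence, since the argument that no flat third move arises really lives in the overlay model and one must check that passing from the overlay to the actual diagram $\phi(K)$ introduces no extra flat crossings.
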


\section{Examples}

Below, we give a series of examples.

Consider the Whitehead link shown in Fig.\ref{whitehead}, upper-left part. Taking away its trivial (red) component, we get the following link in the full torus, see top-right picture.

\begin{figure}
\centering\includegraphics[width=0.6\textwidth]{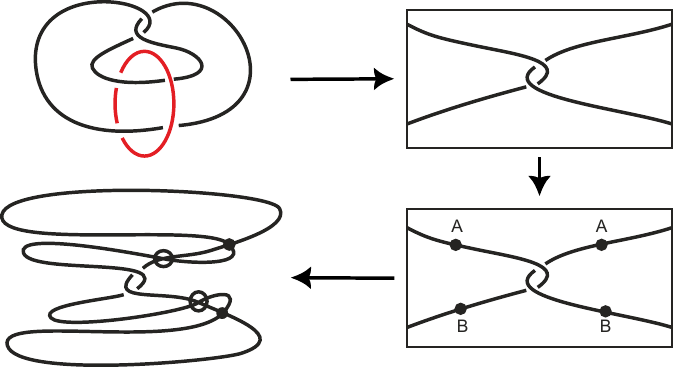}
\caption{The knot on the full torus}
\label{whitehead}
\end{figure}

If we take $d=2$, we shall see two pairs of points ($A,A$ and $B,B$) which have to be pasted together,
down-right. After creating corresponding flat crossings (and forgetting the full torus structure),
we get a flat-virtual diagram in the bottom-left picture.

Let us call the obtained flat-virtual knot diagram $W$. It has two flat crossings. It is not hard to see that the corresponding flat-virtual knot is non-trivial. The main reason is that the first Reidemeister move is not allowed for flat virtual knots.



In order to show that the flat-virtual knot $W$ is not trivial, we consider a polynomial invariant of flat-virtual links --- {\em flat-virtual Jones polynomial}.

Let $D$ be a flat-virtual oriented link diagram. Denote the set of classical crossings of $D$ by $C(D)$. Consider the set of {\em states} $S=\{0,1\}^{C(D)}$. Any state $s\in S$ determines a smoothing $D_s$ of the diagram $D$ which is obtained from $D$ by the smoothing rule given in Fig.~\ref{fig:smoothing_rule}.

\begin{figure}
\centering\includegraphics[width=0.5\textwidth]{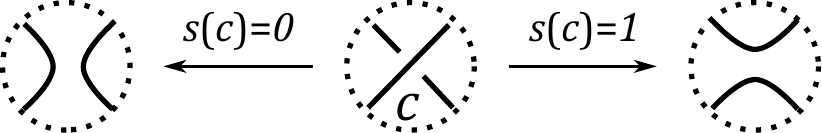}
\caption{Smoothing rule at a crossing $c$}\label{fig:smoothing_rule}
\end{figure}

For a given state $s$, denote
\[
\alpha(s)=\#\{c\in C(D)\mid s(c)=0\},\quad \beta(s)=\#\{c\in C(D)\mid s(c)=1\}.
\]
Let $\gamma_{even}(s)$ (resp. $\gamma_{odd}(s)$) be the number of components in $D_s$ which have even (resp. odd) number of flat crossings (note that we count a flat self-crossing of a component only once). Let $w(D)=\sum_{c\in C(D)}w(c)$ be the writhe of the diagram $D$.

Define the {\em flat-virtual Jones polynomial} $X(D)\in\Z[a,a^{-1},b]$ of the diagram $D$ by the formula
\begin{equation}\label{eq:flat_Jones}
X(D)=(-a)^{-3w(D)}\sum_{s\in S}a^{\alpha(s)-\beta(s)}(-a^2-a^{-2})^{\gamma_{even}(s)}b^{\gamma_{odd}(s)}.
\end{equation}

\begin{theorem}
The flat-virtual Jones polynomial $X$ is an invariant of flat-virtual links.
\end{theorem}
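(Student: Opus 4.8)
The plan is to mimic the proof that the Kauffman bracket yields the Jones polynomial. Introduce the unnormalised state sum $\langle D\rangle=\sum_{s\in S}a^{\alpha(s)-\beta(s)}(-a^2-a^{-2})^{\gamma_{even}(s)}b^{\gamma_{odd}(s)}$, so that $X(D)=(-a)^{-3w(D)}\langle D\rangle$, and reduce the theorem to two facts: (i) $\langle D\rangle$ is unchanged under every move of Definition~1 that is not the classical first Reidemeister move; (ii) under a classical first Reidemeister move $\langle D\rangle$ is multiplied by the usual factor $-a^{\pm3}$. Granting these, $X$ is invariant, since $w(D)$ changes by $\pm1$ under R1 and by $0$ under every other move, so the change of the prefactor $(-a)^{-3w(D)}$ exactly cancels the factor $-a^{\pm3}$. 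Fact (ii) is the classical computation verbatim: the kink crossing resolves into a small circle bounding a disk and carrying no flat crossing, which is therefore counted by $\gamma_{even}$ and contributes the loop value $-a^2-a^{-2}$.

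Next consider the moves that involve no classical crossing: the flat second and third Reidemeister moves and the virtual detour move. These leave the state set $S$ and the quantities $\alpha(s)$, $\beta(s)$, $w(D)$ untouched, so it is enough to check that performing such a move inside a smoothing $D_s$ preserves $\gamma_{even}(s)$ and $\gamma_{odd}(s)$. For a detour move this is immediate: rerouting an arc carrying only virtual crossings changes neither the partition of $D_s$ into components nor the set of flat crossings lying on any component. For a flat R2 move the underlying immersed curves are merely isotoped --- flat crossings are transverse pass-through vertices, so no component gets reconnected --- and the number of flat crossings on each affected component changes by $0$ or $\pm2$; for a flat R3 move the three flat crossings are relocated among the same triple of local strands, so each component keeps its flat-crossing count. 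In every case the parities, hence $\gamma_{even}$ and $\gamma_{odd}$, are preserved. This is exactly the point at which one uses that a flat first Reidemeister move is \emph{not} allowed --- it would change one parity --- and it is consistent with the knot $W$ above being nontrivial.

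For the classical second and third Reidemeister moves I would run the standard bracket computation at the two, respectively three, classical crossings involved. The flat crossings lie on the arcs away from the move and are carried along it passively, so each arc or circle produced in the local skein expansion carries the same flat crossings on the two sides of the move; the only closed circles that appear are the small R2 bubbles, and these carry no flat crossing and hence contribute the ``even'' loop value $-a^2-a^{-2}$, exactly the quantity used in the classical identities. Thus $\langle\cdot\rangle$ is invariant under classical R2 and R3 just as the ordinary Kauffman bracket is, and the writhe is unchanged because all crossing signs are preserved.

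The remaining case, and the one I expect to be the main obstacle, is the mixed flat-classical third Reidemeister move, where the two mechanisms interact. Here the local picture has exactly one classical crossing $c$; split the state sum according to the smoothing of $c$. Because the move preserves the over/under information at $c$ (and, with orientations fixed, its sign), the states of the two sides of the move are in an obvious bijection that matches the smoothing type of $c$, so the factors $a^{\alpha(s)-\beta(s)}$ and the writhe agree term by term. It then remains, by drawing the two pictures, to verify that for each of the two smoothings of $c$ the resulting local diagrams on the two sides differ only by a planar isotopy together with one flat R2 move on the strand that carries the two flat crossings: the two flat crossings bound a bigon with the reconnected strand that is either removed or created. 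By the flat-move analysis above this preserves $\gamma_{even}(s)$ and $\gamma_{odd}(s)$, so $\langle\cdot\rangle$ is unchanged term by term. Assembling these cases proves that $\langle\cdot\rangle$ is invariant away from R1, and hence that $X$ is an invariant of flat-virtual links.
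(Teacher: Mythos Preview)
Your proof is correct and follows essentially the same approach as the paper's: both reduce to the classical Kauffman bracket argument together with the observation that every move preserves the parity of the flat-crossing count on each component of each smoothing (the paper simply notes that the mixed move changes this count by~$2$, which is exactly your flat~R2 reduction). One minor imprecision: in the mixed R3 case only \emph{one} of the two smoothings of the classical crossing requires a flat R2 to match the other side --- the other pair of local pictures is already planar isotopic --- but this does not affect your conclusion.
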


\begin{proof}
The proof goes along the lines of the proof of invariance for the classical Kauffman bracket and Jones polynomial~\cite{VirtualKnots}.

Let $D$ and $D'$ be two oriented flat-virtual link diagrams which differ by a Reidemeister move. If the move is classical one or a detour move then it does not affect the number of odd components in the smoothed diagrams $D_s$ and $D'_s$. Hence, we can use classical reasonings here.

If the move is a flat second Reidemeister move then it changes by $2$ the number of flat crossings on one or two components in the smoothed diagrams $D_s$, so the parity of the component remain unchanged. Then the expressions for $X(D)$ and $X(D')$ coincide.

A flat third Reidemeister move does not change the expression for $X(D)$.

Finally, a mixed flat third Reidemeister move may change by $2$ the number of flat crossings on two components in smoothed diagrams (Fig.~\ref{fig:smooth_reidmove3}). Hence, the parity of components in $D_s$ does not change, so does the value $X(D)$.
\begin{figure}
\centering\includegraphics[width=0.4\textwidth]{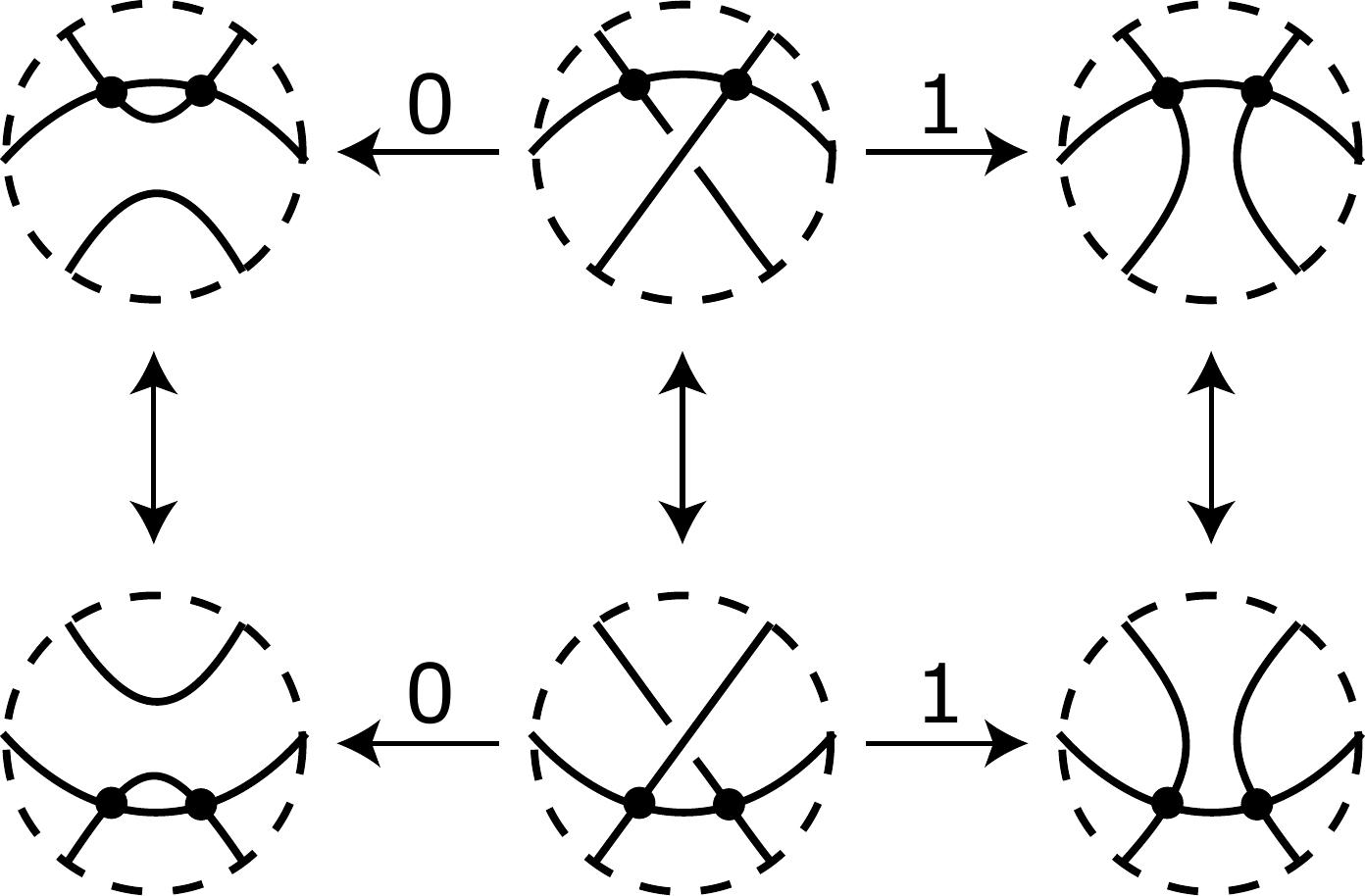}
\caption{Smoothings of a mixed third Reidemeister move}\label{fig:smooth_reidmove3}
\end{figure}
\end{proof}

For the flat-virtual knot $W$ we have
\begin{multline*}
X(W)=(-a)^{-6}[a^2(-a^2-a^{-2})b^2+2b^2+a^{-2}(-a^2-a^{-2})]=(a^{-6}-a^{-2})b^2-a^{-10}-a^{-6}.
\end{multline*}

The inequality $X(W)\ne 1$ shows that the Whitehead link is not split.

\end{document}